\documentclass[a4paper]{amsart}

\usepackage{graphicx}

\newtheorem{theorem}{Theorem}
\newtheorem{proposition}[theorem]{Proposition}
\newtheorem{lemma}[theorem]{Lemma}

\newtheorem{corollary}[theorem]{Corollary}

\def\R{\mathbb{R}}
\def\C{\mathbb{C}}

\def\Q{\mathbb{Q}}
\def\Z{\mathbb{Z}}

\newcommand{\Oh}{\mathrm{O}}
\newcommand{\oh}{\mathrm{o}}
\newcommand{\e}{\mathrm{e}}
\newcommand{\im}{\mathrm{i}}
\newcommand{\dd}{\mathrm{d}}

\begin{document}

\title{Asymptotic Estimates for Some Number Theoretic Power Series}

\author{Stefan Gerhold}
\thanks{This work was financially supported by CDG, BA-CA, AFFA, and the joint centre Microsoft Research-INRIA}

\address{Vienna University of Technology and
Microsoft Research-INRIA, Orsay}

\email{sgerhold at fam.tuwien.ac.at}

\keywords{Arithmetic function, asymptotics, power series, Mellin transform}

\subjclass[2000]{Primary 11N37; Secondary 30B10}


\date{\today}

\begin{abstract}
  We derive asymptotic bounds for the ordinary generating functions of several classical
  arithmetic functions, including the M\"obius, Liouville, and von Mangoldt functions.
  The estimates result from the Korobov-Vinogradov zero-free region for the Riemann zeta-function,
  and are sharper than those obtained by Abelian theorems from bounds for the summatory functions.
\end{abstract}

\maketitle

\vskip 5mm
\begin{small}
\begin{flushright}
{\sl Such functions as $\sum \mu(n) x^n$, \\
$\sum \phi(n) x^n$, $\sum \Lambda(n)x^n$ are extremely difficult to handle.}
\\[2mm]
{\sc --- G.H. Hardy, E.M. Wright} \rm\cite{HaWr79}
\end{flushright}
\end{small}


\section{Introduction}

While Hardy and Wright are of course right in that ordinary generating functions of arithmetic
functions do not share the versatility and usefulness of their well-known Dirichlet counterparts,
several non-trivial results -- both old and new -- have been obtained for them. For instance, the analysis
of
\[
  \sum_{n=1}^\infty \tau(n) z^n = \sum_{n=1}^\infty \frac{z^n}{1-z^n},
\]
where~$\tau(n)$ denotes the number of divisors of~$n$, goes back to Lambert~\cite{Kn13},
and the expansion
\begin{align}
  & \sum_{n=1}^\infty \tau(n) \e^{-n t} \sim \frac1t \log \frac1t + \frac{\gamma}{t}
    - \sum_{n=0}^\infty \frac{B_{n+1}^2}{(n+1)!(n+1)} t^n, \label{eq:tau exp} \\
  & \text{where $t \to 0$},\quad |\arg(t)|<\tfrac12 \pi-\theta\quad \text{for some $\theta>0$}, \notag
\end{align}
involving Euler's constant and Bernoulli numbers, has been known for a long time~\cite{FlGoDu95,Sc74,Titchmarsh86,Wi17}.
Titchmarsh~\cite{Titchmarsh86} has applied~\eqref{eq:tau exp} in a result on mean values
of the Riemann zeta-function, and
Canfield et~al.~\cite{CaSaWi04} have extended~\eqref{eq:tau exp} to the case of the arithmetic
function that counts only divisors in some fixed residue class.
Another generalization has been obtained by Berndt and Evans~\cite{BeEv85}, who also proved the formula
\[
  \sum_{n=1}^\infty p_n z^n \sim \frac{1}{(1-z)^2} \log \frac{1}{1-z}, \qquad z\to 1^-\ \text{in}\ \R,
\]
where~$p_n$ is the $n$-th prime number.

Recently, the transcendence of number theoretic power series has been of interest to several authors.
Banks et~al.~\cite{BaLuSh05} have established the irrationality of $\sum \mu(n)z^n$, $\sum p_n z^n$, $\sum \tau(n)z^n$,
and several other similar series, over~$\Q(z)$.
Later it was noted~\cite{BeCo09,BoCo09} that the transcendence of these series follows easily from the fact that they
have the unit circle as a natural boundary.
This property even shows that they are not $D$-finite~\cite{BeGeKlLu08,FlGeSa05,St80}.
General results about the transcendence of $\sum f(n) z^n$ with~$f$ multiplicative have recently been
obtained by Borwein and Coons~\cite{BoCo09} and by Bell and Coons~\cite{BeCo09}.

The present note is concerned with asymptotic estimates for power series $\sum a_n z^n$,
where the Dirichlet generating function $\sum a_n n^{-s}$ has singularities at the zeros of the Riemann zeta-function.
For instance, $a_n=\mu(n)$ falls under this category.
Delange~\cite{De00} has noted that the prime number theorem in the form
\[
  M(x) := \sum_{n\leq x} \mu(n) = \oh(x), \qquad x\to\infty,
\]
where~$M(x)$ denotes the Mertens function, readily implies
\[
  \sum_{n=1}^\infty \mu(n) z^n = \oh\left( \frac{1}{1-z} \right), \qquad z\to1^-\ \text{in}\ \R.
\]
A quick way to improve this starts from Walfisz' deep result~\cite{Wa63}
\begin{equation}\label{eq:walfisz}
  M(x) = \Oh\left(x \exp\left( -\frac{c(\log x)^{3/5}}{(\log\log x)^{1/5}} \right) \right).
\end{equation}
Recall the following a basic Abelian theorem~\cite{BiGoTe89,FlGeSa05,Se95}:
\begin{lemma}\label{le:abelian}
  Suppose that~$(a_n)$ is an ultimately monotone real sequence with $a_n\sim n^\alpha \ell(n)$,
  where $\alpha>0$, and~$\ell$ is positive and varies slowly at infinity. Then
  \[
    \sum_{n=1}^\infty a_n z^n \sim \frac{\Gamma(\alpha+1)}{(1-z)^{\alpha+1}} \ell\left( \frac{1}{1-z}\right)
  \]
  as $z\to1$ in any sector
  \begin{equation}\label{eq:sector}
    S_\theta := \{ z \in\C : |\arg(1-z)| \leq \tfrac12 \pi -\theta \}, \qquad \theta >0.
  \end{equation} 
\end{lemma}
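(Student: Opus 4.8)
The plan is to prove the estimate first for the canonical sequence $c_n:=n^\alpha\ell(n)$ and then transfer it to a general ultimately monotone $(a_n)$ with $a_n\sim c_n$; the right‑hand side $\ell(1/(1-z))$ is understood via the natural extension of $\ell$ to the sector (all applications below have $\ell$ given by an explicit elementary formula, for which this is harmless). Since $\alpha>0$ and $\ell$ is positive, $c_n\to\infty$, hence $a_n\to\infty$ and $a_n$ is eventually positive and increasing. I would parametrise by $z=\e^{-s}$ with $\sigma:=\operatorname{Re}s>0$; as $z\to1$ inside $S_\theta$ one has $s=-\log z=(1-z)+\Oh(|1-z|^2)$, so $s\to0$ with $|\arg s|\le\tfrac12\pi-\theta'$ for some $\theta'>0$. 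The geometric fact underlying every estimate below is that in a Stolz sector the distance to $1$ is comparable to the distance to the unit circle: from $|z|^2=1-2\operatorname{Re}(1-z)+|1-z|^2$ one gets $1-|z|\ge c_\theta|1-z|$ with $c_\theta>0$ depending only on $\theta$, and likewise $\sigma\asymp|s|\asymp|1-z|$ throughout $S_\theta$.

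For the canonical sequence I would treat real $z=\e^{-t}$, $t\to0^+$, first. Comparing $\sum_n c_n\e^{-nt}$ with $\int_0^\infty x^\alpha\ell(x)\e^{-xt}\,\dd x$ (legitimate by the usual monotonicity/Euler--Maclaurin argument, $x^\alpha\ell(x)$ being regularly varying), substituting $x=u/t$, and using the uniform convergence theorem for slowly varying functions together with Potter's bounds to pass to the limit under the integral sign, one obtains $\sum_n c_n\e^{-nt}\sim\Gamma(\alpha+1)\,t^{-\alpha-1}\ell(1/t)$; since $t\sim1-z$ and $\ell$ varies slowly, this is the assertion for real $z$. This is the classical Karamata‑type Abelian theorem, cf.\ \cite{BiGoTe89}.

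To obtain uniformity over $S_\theta$ I would rerun the same scheme with the complex parameter $s$. In $\int_0^\infty x^\alpha\ell(x)\e^{-xs}\,\dd x$ substitute $x=u/\sigma$ to get $\sigma^{-\alpha-1}\int_0^\infty u^\alpha\ell(u/\sigma)\e^{-(s/\sigma)u}\,\dd u$; since $s/\sigma=1+\im\tfrac{\operatorname{Im}s}{\sigma}$ with $|\operatorname{Im}s|/\sigma$ bounded on the sector, after dividing by $\ell(1/\sigma)$ the integrand is dominated, via Potter's bounds, by an $s$‑independent integrable function, so dominated convergence yields $\int_0^\infty u^\alpha\ell(u/\sigma)\e^{-(s/\sigma)u}\,\dd u\sim\ell(1/\sigma)\Gamma(\alpha+1)(s/\sigma)^{-\alpha-1}$, whence $\int_0^\infty x^\alpha\ell(x)\e^{-xs}\,\dd x\sim\Gamma(\alpha+1)\,s^{-\alpha-1}\ell(1/\sigma)$. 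A sum--integral comparison for the complex exponential — using $|\e^{-ns}-\e^{-xs}|\le|s|\e^{-\sigma(n-1)}$ for $x\in[n-1,n]$ and $|s|\asymp\sigma$ — shows that $\sum_n c_n\e^{-ns}$ has the same asymptotics. Since $\sigma\asymp|1-z|$ and $\ell$ varies slowly, one may then replace $\ell(1/\sigma)$ by $\ell(1/(1-z))$ and $s^{-\alpha-1}$ by $(1-z)^{-\alpha-1}$ (principal branches, $\arg s\to\arg(1-z)$), giving the claim for $(c_n)$ uniformly in $S_\theta$.

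For the transfer, fix $\varepsilon>0$; then $|a_n-c_n|\le\varepsilon c_n$ for $n\ge N_\varepsilon$, so $\bigl|\sum_n(a_n-c_n)z^n\bigr|\le C_\varepsilon+\varepsilon\sum_n c_n|z|^n$. By the real‑variable case, $\sum_n c_n|z|^n\sim\Gamma(\alpha+1)(1-|z|)^{-\alpha-1}\ell(1/(1-|z|))$, while the sector case gives $\bigl|\sum_n c_n z^n\bigr|\ge\tfrac12\Gamma(\alpha+1)|1-z|^{-\alpha-1}\ell(1/|1-z|)$ near $1$ in $S_\theta$; since $1-|z|\asymp_\theta|1-z|$ and $\ell$ varies slowly, the quotient $\sum_n c_n|z|^n\big/\bigl|\sum_n c_n z^n\bigr|$ stays bounded on $S_\theta$, hence $\sum_n(a_n-c_n)z^n=\oh\bigl(\sum_n c_n z^n\bigr)$ and therefore $\sum_n a_n z^n\sim\Gamma(\alpha+1)(1-z)^{-\alpha-1}\ell(1/(1-z))$ throughout $S_\theta$. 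I expect the main obstacle to be the uniformity in the third step: controlling the dominated‑convergence passage and the sum--integral comparison simultaneously for all directions $\arg s$, and reconciling the real‑variable slow variation of $\ell$ with its complex argument $1/(1-z)$ — this is precisely where the Stolz‑sector comparability $\sigma\asymp|s|\asymp|1-z|$ is indispensable.
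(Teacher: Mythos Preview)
The paper does not prove this lemma at all; it is quoted as a known result with references to \cite{BiGoTe89,FlGeSa05,Se95} (``Recall the following \dots basic Abelian theorem''), so there is no in-paper proof to compare against.

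Your sketch is the standard route found in those references, in particular in Bingham--Goldie--Teugels: prove the asymptotic for the canonical regularly varying sequence $c_n=n^\alpha\ell(n)$ via an integral comparison together with Potter bounds and the uniform convergence theorem, extend from $z\in(0,1)$ to the Stolz sector using the comparability $\sigma\asymp|s|\asymp|1-z|$ (which keeps $s/\sigma$ bounded and makes the dominated-convergence step uniform in the direction), and then transfer to a general $a_n\sim c_n$ by bounding $\sum(a_n-c_n)z^n$ against $\varepsilon\sum c_n|z|^n$. The argument is sound; the only genuinely delicate spot is exactly the one you flag, namely making sense of $\ell(1/(1-z))$ for complex $z$. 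The clean formulation is to state the conclusion with $\ell(1/|1-z|)$ (equivalently $\ell(1/\sigma)$), which is what your proof actually delivers, and then note that for the concrete slowly varying factors arising in the paper this coincides with the author's $\ell(1/(1-z))$.
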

From the lemma (applied here only for real~$z$) and~\eqref{eq:walfisz} we obtain
\begin{align}
  \sum_{n=1}^\infty \mu(n) z^n &= (1-z) \sum_{n=1}^\infty M(n) z^n \notag \\
  &= \Oh\left(\frac1t \exp\left( -\frac{c(\log 1/t)^{3/5}}{(\log\log 1/t)^{1/5}} \right) \right),
  \qquad t=-\log z \sim 1-z \to0^+\ \text{in}\ \R. \label{eq:from abel}
\end{align}
There seems to be no Tauberian result available to translate~\eqref{eq:from abel} back into an estimate
for the Mertens function~$M(x)$. This typical asymmetry suggests that we might be able to do a little better
than~\eqref{eq:from abel} by using dedicated methods. Indeed, our main result
(Theorem~\ref{thm:main} below) improves~\eqref{eq:from abel} to
\begin{equation}\label{eq:mu est}
  \sum_{n=1}^\infty \mu(n) z^n = \Oh\left(\frac1t \exp\left(-\frac{0.0203\times \log (1/t)}
    {(\log\log 1/t)^{2/3}(\log\log\log 1/t)^{1/3}}\right) \right),
   \quad t=-\log z,
\end{equation}
where~$z\to0$ in an arbitrary sector~$S_\theta$, $\theta>0$.
The proof rests on the contour integral representation~\cite{Titchmarsh86} 
\begin{equation}\label{eq:mu repr}
  \sum_{n=1}^\infty \mu(n) \e^{-nt} =
    \frac{1}{2\pi\im} \int_{\kappa-\im\infty}^{\kappa+\im\infty} \frac{\Gamma(s)}{\zeta(s)} t^{-s} \dd s, \qquad \kappa>1.
\end{equation}
In a way that is familiar from the prime number theorem or the Selberg-Delange method~\cite{Tenenbaum95},
one can deform the integration contour a little bit into the critical strip~$0<\Re(s)<1$,
and then estimate the resulting integral.
The exponential decrease of the Gamma function along vertical lines is a convenient feature
of~\eqref{eq:mu repr}, which is not present in the Perron summation formula~\cite{Tenenbaum95}
\begin{equation}\label{eq:perron}
  \sum_{n\leq x}\mu(n) =  \frac{1}{2\pi\im} \int_{\kappa-\im\infty}^{\kappa+\im\infty} \frac{1}{\zeta(s)}\frac{x^s}{s}  \dd s,
     \qquad \kappa>1,\ x \in \R^+\setminus\Z.
\end{equation}
Power series thus tend to be easier to estimate than summatory functions. The fact that~$x$
is real in~\eqref{eq:perron}, whereas in~\eqref{eq:mu repr} it is natural to consider also complex~$t$,
causes no great difficulties. (At least if~$|\arg(t)|$ stays bounded away from~$\tfrac12\pi$.)

In the following section we put~\eqref{eq:mu est} into perspective by relating the growth
of $\sum \mu(n) z^n$ to the Riemann Hypothesis.
Section~\ref{se:main} contains our main result, from which~\eqref{eq:mu est} follows.
A few related power series will be estimated in Section~\ref{se:fu ex}.
Section~\ref{se:open} collects some open problems.


\section{Connection to the Riemann Hypothesis}

In conjunction with~\eqref{eq:walfisz} and~\eqref{eq:mu est}, the following proposition shows that the gap
between the Riemann Hypothesis and what is provable today is slightly smaller in the power series
case than in the case of the summatory function~$M(x)$.

\begin{proposition}
  Let $\tfrac12 \leq\eta <1$. Then the following are equivalent:
  \begin{itemize}
    \item[$(i)$]\label{it:i} $\zeta(s)$ has no zeros for $\Re(s)>\eta$,
    \item[$(ii)$] $M(x)=\Oh(x^{\eta+\varepsilon})$,
    \item[$(iii)$] $\sum_{n\geq1} \mu(n) z^n = \Oh((1-z)^{-\eta+\varepsilon})$ as $z\to 1^-$ in $\R$.
  \end{itemize}
\end{proposition}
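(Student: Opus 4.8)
The plan is to establish the cycle of implications $(i)\Rightarrow(ii)\Rightarrow(iii)\Rightarrow(i)$, with the understanding that the estimates in $(ii)$ and $(iii)$ are asserted for every $\varepsilon>0$. The implication $(i)\Rightarrow(ii)$ is classical and I would merely cite it: if $\zeta(s)\neq0$ for $\Re(s)>\eta$, then $1/\zeta$ is analytic in that half-plane, and a Borel-Carath\'eodory argument applied to $\log\zeta$, combined with the usual convexity bound for $\zeta$, gives $1/\zeta(\sigma+\im u)=\Oh(|u|^{\varepsilon})$ uniformly for $\sigma\geq\eta+\varepsilon$; shifting the line in the Perron formula~\eqref{eq:perron} to $\Re(s)=\eta+\varepsilon$ then picks up no residue (as $1/\zeta$ vanishes at $s=1$), and the resulting integral is $\Oh(x^{\eta+\varepsilon})$. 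See Titchmarsh~\cite{Titchmarsh86}.

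For $(ii)\Rightarrow(iii)$ I would argue exactly as in the derivation of~\eqref{eq:from abel}. Abel summation gives $\sum_{n\geq1}\mu(n)z^n=(1-z)\sum_{n\geq1}M(n)z^n$ for $0<z<1$, whence $\bigl|\sum_{n\geq1}M(n)z^n\bigr|\leq\sum_{n\geq1}|M(n)|z^n\ll\sum_{n\geq1}n^{\eta+\varepsilon}z^n$. Lemma~\ref{le:abelian}, applied with $\alpha:=\eta+\varepsilon>0$ and $\ell\equiv1$, gives $\sum_{n\geq1}n^{\eta+\varepsilon}z^n\sim\Gamma(\eta+\varepsilon+1)(1-z)^{-\eta-\varepsilon-1}$, so multiplying by $1-z$ yields $\sum_{n\geq1}\mu(n)z^n\ll(1-z)^{-\eta-\varepsilon}$, which is $(iii)$. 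Only the crude absolute-value bound enters here, which is why no Tauberian device is needed.

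The one direction involving a genuine computation is $(iii)\Rightarrow(i)$, which I would obtain by analytically continuing the Mellin transform behind~\eqref{eq:mu repr}. Put $z=\e^{-t}$ and $g(t):=\sum_{n\geq1}\mu(n)\e^{-nt}$. Since $1-\e^{-t}\sim t$ as $t\to0^+$, hypothesis $(iii)$ gives $g(t)=\Oh(t^{-\eta-\varepsilon})$ there, while $|g(t)|\leq\sum_{n\geq1}\e^{-nt}\ll\e^{-t}$ for $t\geq1$. Hence the integral $\int_0^\infty t^{s-1}g(t)\,\dd t$ converges and is analytic in $\Re(s)>\eta+\varepsilon$: the tail $\int_1^\infty$ is entire, and $\int_0^1$ is analytic there by the bound near the origin. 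For $\Re(s)>1$, Fubini's theorem gives $\int_0^\infty t^{s-1}g(t)\,\dd t=\sum_{n\geq1}\mu(n)\Gamma(s)n^{-s}=\Gamma(s)/\zeta(s)$ (which is the inverse of~\eqref{eq:mu repr}), so this integral furnishes the analytic continuation of $\Gamma(s)/\zeta(s)$ to $\Re(s)>\eta+\varepsilon$. Since $\Gamma$ is analytic and non-vanishing in that region --- its poles $0,-1,-2,\dots$ all lying to the left of $\Re(s)=\eta+\varepsilon$ because $\eta\geq\tfrac12$ --- the quotient $1/\zeta=(\Gamma/\zeta)/\Gamma$ is analytic for $\Re(s)>\eta+\varepsilon$, so $\zeta$ has no zeros there. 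Letting $\varepsilon\to0$ gives $(i)$.

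The main obstacle is structural rather than technical: the naive ``Tauberian'' passage from the power-series bound $(iii)$ directly back to the summatory bound $(ii)$ is not available (as remarked after~\eqref{eq:from abel}), so $(iii)\Rightarrow(ii)$ has to be routed through $(iii)\Rightarrow(i)\Rightarrow(ii)$, i.e.\ through the zeros of $\zeta$. The remaining points are routine: the interchange of sum and integral in the Mellin identity, the analyticity of the two half-range integrals in the stated half-plane, and --- for the classical step $(i)\Rightarrow(ii)$ that I would import from the literature --- the polynomial bound for $1/\zeta$ throughout the zero-free region.
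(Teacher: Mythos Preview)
Your argument is correct and follows essentially the same route as the paper: the classical equivalence $(i)\Leftrightarrow(ii)$ is cited from Titchmarsh, $(ii)\Rightarrow(iii)$ is obtained by the Abel-summation identity $(1-z)\sum M(n)z^n$ together with Lemma~\ref{le:abelian}, and $(iii)\Rightarrow(i)$ goes through the Mellin transform $\int_0^\infty F(t)t^{s-1}\,\dd t=\Gamma(s)/\zeta(s)$, whose convergence for $\Re(s)>\eta+\varepsilon$ forces $1/\zeta$ to be analytic there. Your version simply fills in the details the paper leaves implicit (the split of the Mellin integral at $t=1$, the non-vanishing of $\Gamma$, and the final $\varepsilon\to0$).
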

\begin{proof}
  The equivalence of~$(i)$ and~$(ii)$ is classical for~$\eta=\tfrac12$, see Titchmarsh~\cite{Titchmarsh86},
  and the proof of the more general case is an easy modification.
  (The implication $(ii)\Rightarrow (i)$, which we actually do not require, is posed as Exercise~13.4
  in Apostol's textbook~\cite{Apostol76}.)
  If~$(ii)$ holds, then~$(iii)$ follows by Lemma~\ref{le:abelian}.
  Finally, if we assume that~$(iii)$ is true, we have that
  \[
    F(t) := \sum_{n=1}^\infty \mu(n) \e^{-nt} =\Oh(t^{-(\eta+\varepsilon)}), \qquad t\to0^+\ \text{in}\ \R.
  \]  
  Hence the Mellin transform~\cite{FlGoDu95}
  \[
    \int_0^\infty F(t) t^{s-1} \dd t = \frac{\Gamma(s)}{\zeta(s)}
  \]
  defines an analytic function for~$\Re(s)>\eta$.
\end{proof}

Under the Riemann Hypothesis, one would expect that we can push the integration contour in~\eqref{eq:mu repr}
across the critical line~$\Re(s)=\tfrac12$ to obtain  an expansion of the form
\begin{equation}\label{eq:mu rh}
  \sum_{n=1}^\infty \mu(n) \e^{-nt} \stackrel{?}{=} t^{-1/2} H(\log(1/t)) -2 + \oh(1),
  \qquad t\to0,
\end{equation}
where~$H$, a bounded oscillating function, is a sum of infinitely many harmonics corresponding to
the non-trivial zeros of the zeta-function. The fast decrease of the Gamma function makes the residues
of $\Gamma(s)/\zeta(s)$ at these zeros rather small, so that the term~$-2$ will dominate in~\eqref{eq:mu rh}
unless~$1-z$ is very close to zero. Indeed, the $\Omega(t^{-1/2})$ term becomes numerically visible
only from about $1-z=10^{-10}$ onwards [P.~Flajolet, private communication].
This ``fake asymptotics'' property has also been noted by Bateman and Diamond~\cite{BaDi00}.
Without assuming the Riemann Hypothesis, Delange~\cite{De00} has shown that
\[
  \sum_{n=1}^\infty \mu(n) z^n = \Omega_\pm\left( \frac{1}{\sqrt{1-z}} \right), \qquad z\to 1^-\ \text{in} \ \R,
\]
which is in line with~\eqref{eq:mu rh}, and shows that the left-hand side {\em does not} converge to~$-2$.


\section{Main Result}\label{se:main}

We write
\[
  D(s) = \sum_{n=1}^\infty\frac{a_n}{n^s}, \qquad s = \sigma + \im \tau,
\]
for the Dirichlet generating function of a sequence~$a_n$. The following theorem gives an estimate
for the power series $\sum a_n z^n$ near $z=1$, assuming analyticity and growth conditions for~$D(s)$.
\begin{theorem}\label{thm:main}
  Let~$a_n$ be a sequence of complex numbers such that~$D(s)$ is absolutely convergent for~$\Re(s)>1$
  and has an analytic continuation to a set~$\Omega$ of the form
  \begin{equation}\label{eq:domain}
    \sigma \geq g(\tau) :=
    \begin{cases}
      1- b(\log|\tau|)^{-\alpha} (\log \log |\tau|)^{-\beta} & |\tau| \geq w \\
      1- b(\log w)^{-\alpha} (\log \log w)^{-\beta} &  |\tau| \leq w
    \end{cases}
  \end{equation}
  for some positive parameters~$\alpha,\beta,b,w$. Assume furthermore that
  \[
    D(s) = \Oh(\tau^\nu),
  \]
  uniformly as $s\to\infty$ in~$\Omega$, for some~$\nu>0$. Then for any $\varepsilon>0$
  \begin{equation*}\label{eq:gen est}
    \sum_{n=1}^\infty a_n z^n = \Oh\left(\frac1t \exp\left(-\frac{(b-\varepsilon) \log (1/t)}
     {(\log\log 1/t)^{\alpha}(\log\log\log 1/t)^{\beta}}\right) \right),
     \qquad t=-\log z \sim 1-z.
  \end{equation*}
  The variable~$z$ may tend to~$1$ in an arbitrary sector of the form~\eqref{eq:sector}.
\end{theorem}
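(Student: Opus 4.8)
The plan is to start from a contour integral representation analogous to~\eqref{eq:mu repr}. For $\kappa>1$ we have the Mellin--Cahen type formula
\[
  \sum_{n=1}^\infty a_n \e^{-nt} = \frac{1}{2\pi\im} \int_{\kappa-\im\infty}^{\kappa+\im\infty} \Gamma(s) D(s) t^{-s} \dd s,
\]
valid because $\Gamma(s)$ decays exponentially on vertical lines while $D(s)=\Oh(1)$ on $\Re(s)=\kappa$; the sum $\sum a_n\e^{-nt}$ and the series defining $D$ can be interchanged by absolute convergence. The idea is then to push the contour to the left, onto the boundary curve $\sigma=g(\tau)$ of~$\Omega$. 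Since $D(s)$ is analytic in $\Omega$ and the only pole of $\Gamma(s)D(s)$ we cross is the simple pole of $\Gamma$ at $s=0$ --- wait, one must be careful: $D$ need not be analytic at $s=1$, so actually no pole is crossed in the open strip $0<\Re(s)<1$, and we keep the contour inside $\Omega$ without collecting any residue (the factor $t^{-s}$ and the bound we seek are all $\Oh(1/t)$-sized, so even if we stopped at a vertical line $\Re(s)=1-\delta$ we would already gain a power $t^{-(1-\delta)}$; the point of using the curved boundary is to gain the subpolynomial factor). The contour shift is justified by the exponential decay of $\Gamma(s)$ together with the polynomial bound $D(s)=\Oh(\tau^\nu)$, which kills the horizontal connecting segments at height $\pm T$ as $T\to\infty$.

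Next I would estimate the integral over the shifted contour $\mathcal{C}=\{\sigma+\im\tau : \sigma=g(\tau)\}$. On $\mathcal{C}$ we have $|t^{-s}| = t^{-g(\tau)} = t^{-1} \cdot t^{\,1-g(\tau)} = t^{-1}\exp\bigl(-(1-g(\tau))\log(1/t)\bigr)$, and by construction $1-g(\tau)= b(\log|\tau|)^{-\alpha}(\log\log|\tau|)^{-\beta}$ for $|\tau|\geq w$ (and a constant, the most favourable value, for $|\tau|\leq w$). Combining with $|\Gamma(s)D(s)| = \Oh\bigl(|\tau|^\nu \e^{-\pi|\tau|/2}\bigr)$ (Stirling, valid since $\Re(s)$ stays bounded in a compact range along $\mathcal{C}$), the integrand is bounded by
\[
  \frac{C}{t}\,|\tau|^\nu \e^{-\pi|\tau|/2}\exp\!\left(-\frac{b\log(1/t)}{(\log|\tau|)^{\alpha}(\log\log|\tau|)^{\beta}}\right),
\]
so the whole integral is $\Oh(1/t)$ times $\int |\tau|^\nu\e^{-\pi|\tau|/2}\exp(-b\log(1/t)/((\log|\tau|)^\alpha(\log\log|\tau|)^\beta))\dd\tau$. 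The exponential factor $\e^{-\pi|\tau|/2}$ forces the bulk of the mass to $|\tau|$ of moderate size; the subpolynomial factor is then essentially governed by its value at the relevant scale of $|\tau|$. The standard saddle-point-free device here is to split the range at some cutoff $|\tau|=T_0(t)$ chosen so that $\log T_0 \asymp \log\log(1/t)$ and $\log\log T_0 \asymp \log\log\log(1/t)$ --- concretely $T_0 = \exp\bigl((\log\log(1/t))^{?}\bigr)$ tuned to balance the two contributions. On $|\tau|\leq T_0$ the subpolynomial factor is at most $\exp\bigl(-(b-\varepsilon)\log(1/t)/((\log\log(1/t))^\alpha(\log\log\log(1/t))^\beta)\bigr)$ and the remaining $\tau$-integral is a convergent constant; on $|\tau|>T_0$ the factor $\e^{-\pi|\tau|/2}$ alone is super-polynomially small and absorbs everything, giving a negligible contribution. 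This yields exactly the claimed bound for $\sum a_n\e^{-nt}$ with real $t>0$.

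The extension to complex $z\to1$ in a sector $S_\theta$ is handled by the standard observation that for $t=-\log z$ with $|\arg t|\le\frac12\pi-\theta$, one has $\Re(t) \ge c_\theta |t|$, so $|t^{-s}| = |t|^{-\sigma}\e^{\sigma\arg t\cdot(\text{sign})}$; since $\sigma$ ranges over a bounded set along $\mathcal C$, the factor $\e^{\pm\sigma\arg t}$ is bounded, and $|t|^{-\sigma}$ is controlled exactly as above with $1/t$ replaced by $1/|t|$. One also needs $\e^{-nt}$ in place of $z^n$, which differ by the elementary estimate $|\e^{-nt}-z^n|$-type manipulations, or simply works with $F(t)=\sum a_n\e^{-nt}$ throughout and notes $-\log z\sim 1-z$.

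The main obstacle I anticipate is the bookkeeping in the cutoff choice $T_0(t)$: one must verify that a single choice simultaneously makes the low-frequency contribution have the exponent $(b-\varepsilon)\log(1/t)/((\log\log(1/t))^\alpha(\log\log\log(1/t))^\beta)$ (losing only $\varepsilon$ from $b$, which is where the $\varepsilon$ in the statement comes from --- it absorbs the discrepancy between $\log T_0$ and $\log\log(1/t)$, etc.) and makes the high-frequency tail genuinely negligible. A secondary technical point is checking uniformity of the Stirling bound for $\Gamma(s)$ and of the hypothesis $D(s)=\Oh(\tau^\nu)$ along the curved contour near $\tau=\pm w$, where the contour has a corner; this is routine but must be stated. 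Everything else --- the contour shift, the residue-free passage through the strip, the sectorial extension --- is standard Selberg--Delange / prime-number-theorem technology.
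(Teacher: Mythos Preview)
Your approach is essentially the paper's: the same Mellin integral, the contour pushed left onto $\sigma=g(\tau)$, and a split at a cutoff that balances the arc against the tail. The paper actually keeps the tails $|\tau|>T$ on the vertical line $\Re(s)=\kappa=1-1/\log|t|$ rather than on the curve, but this is cosmetic; its balancing choice is $T=\log(1/|t|)/(\log\log 1/|t|)^{\alpha}$, and any $T_0$ with $\log T_0\sim\log\log(1/|t|)$ works for your version, so the ``main obstacle'' you flag is not a real difficulty.

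There is, however, a genuine error in your sectorial extension. You write $|t^{-s}|=|t|^{-\sigma}\e^{\sigma\arg t\cdot(\mathrm{sign})}$ and conclude that the extra factor is bounded because $\sigma$ stays in a compact interval along~$\mathcal{C}$. The correct identity is
\[
  |t^{-s}| = |t|^{-\sigma}\,\e^{\tau\arg t},
\]
with the \emph{imaginary} part~$\tau$, not~$\sigma$, multiplying $\arg t$. Thus for $|\arg t|$ close to $\pi/2$ this factor grows like $\e^{|\tau|(\pi/2-\theta)}$ and almost cancels the $\e^{-\pi|\tau|/2}$ from Stirling, leaving only $\e^{-\theta|\tau|}$ of decay. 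That residual decay is still sufficient for the whole argument (and is precisely what the paper uses throughout), but your stated justification --- boundedness of~$\sigma$ --- is wrong and would fail if the sector were widened. Once you correct this, your estimate of the integrand should read $\Oh\bigl(|t|^{-1}|\tau|^{\nu}\e^{-\theta|\tau|}\exp(-b\log(1/|t|)/((\log|\tau|)^{\alpha}(\log\log|\tau|)^{\beta}))\bigr)$ on the curve, and the rest of your outline goes through unchanged.
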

This result immediately implies the bound~\eqref{eq:mu est},
by noting that $D(s)=1/\zeta(s)$ for $a_n=\mu(n)$ and putting $\alpha=\tfrac23$ and $\beta=\tfrac13$.
 The required analyticity and growth
of~$1/\zeta(s)$ are the content of Korobov and Vinogradov's famous theorem~\cite{Titchmarsh86}, which describes
the largest known zero-free region for the Riemann zeta function.
(Recall that it leads to the best known error term in the prime number theorem.)
For the constant~$b$ in~\eqref{eq:domain} one may take~$b=0.05507\times(4.45)^{-2/3}> 0.0203$ in this case, by
a result of Ford~\cite{Fo02}.
\begin{proof}[Proof of Theorem~\ref{thm:main}]
  The convergence assumption on~$D(s)$ clearly implies that the radius of convergence of
  $\sum a_n z^n$ is at least one.
  We assume that~$z$ stays inside~$S_{2\theta}$; then $t=-\log z$ satisfies
  \[
    |\arg (t)| \leq \tfrac12 \pi - \theta
  \]
  for small~$|t|$.
  For~$\kappa>1$ we have~\cite[p.~151]{Titchmarsh86}
  \begin{equation}\label{eq:int}
     \sum_{n=1}^\infty a_n \e^{-nt} =
      \frac{1}{2\pi\im} \int_{\kappa-\im\infty}^{\kappa+\im\infty} D(s)\Gamma(s)t^{-s} \dd s.
  \end{equation}
  \begin{figure}[t]
    \includegraphics[scale=1.0]{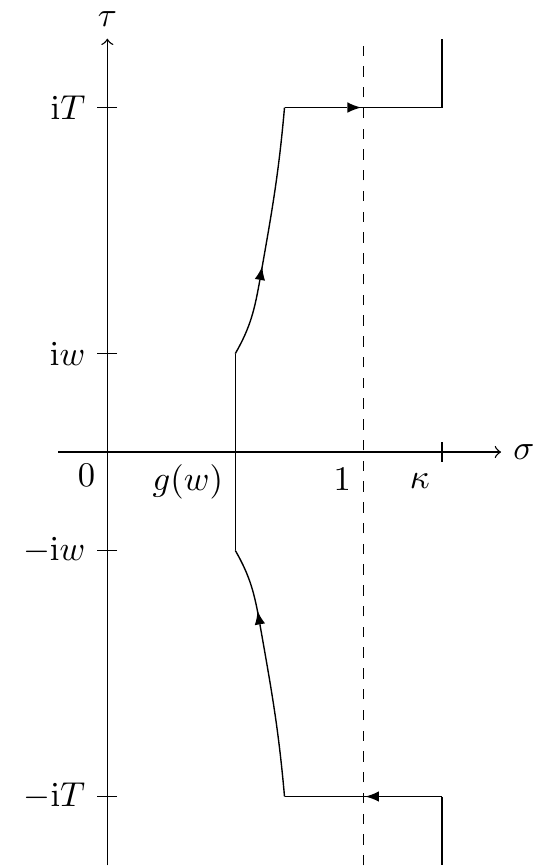}
    \caption{The deformed integration contour, extending into the strip $0<\Re(s)<1$.}
    \label{fig:contour}
  \end{figure}
  Now we deform the integration contour as indicated in Figure~\ref{fig:contour}, where
  \[
    \kappa = 1 - 1/\log|t| >1,
  \]
  and~$T=T(t)>0$, to be be fixed later, tends to infinity as~$t\to0$.
  Between~$\pm\im T$, the contour is defined by the function~$g(\tau)$ from~\eqref{eq:domain}.
 
  We will repeatedly apply the following version of Stirling's formula~\cite{Co35}: If~$\Re(s)=\sigma$
  is confined to a finite interval, then
  \[
    |\Gamma(s)| \sim \sqrt{2\pi}\ \e^{-\pi |\tau|/2}|\tau|^{\sigma-1/2}, \qquad |\tau|\to\infty,
  \]
  uniformly w.r.t.~$\sigma$.

  To bound the integral over the upper vertical line $[\kappa+\im T,\kappa+\im\infty[$, note that there we have
  \begin{align*}
    |\Gamma(s) t^{-s}| &\ll_t  |t|^{-\kappa} \tau^{\kappa-1/2}\e^{-\tau(\pi/2-\arg(t))} \\
    &\leq  \e |t|^{-1} \tau^{\kappa-1/2}\e^{-\theta\tau}.
  \end{align*}
  (Here and in the following, we write~$A \ll_t B$ for $A=\Oh(B)$ as~$t\to0$, where
  the estimate holds uniformly in~$\tau$, if~$\tau=\Im(s)$ is a free variable in the right-hand side~$B$.)
  Hence the integral over the upper vertical line satisfies
  \begin{equation}\label{eq:vert line}
    |I_{\rm{vert}}| \ll_t \frac{1}{|t|} \int_T^\infty \e^{-\theta \tau} \tau^{\kappa+\nu-1/2} \dd \tau
   \ll_t |t|^{-1} \e^{-\theta T} T^{\kappa+\nu-1/2}.
  \end{equation}
  We next estimate the contribution of the horizontal segment $[g(T)+\im T, \kappa +\im T[$
  to the integral. In this range we have
  \[
    |t^{-s}| \leq |t|^{-\kappa} \e^{T(\pi/2-\theta)} = |t|^{-1} \e^{T(\pi/2-\theta)+1}
  \]
  and
  \[
    |\Gamma(s)| \ll_t  T^{\kappa-1/2}\e^{-T\pi /2},
  \]
  hence this portion of the integral is
  \begin{equation}\label{eq:hor}
    |I_{\rm{hor}}| \ll_t |t|^{-1} \e^{-\theta T} T^{\kappa+\nu-1/2},
  \end{equation}
  so that we obtain the same estimate as in~\eqref{eq:vert line}.

  Finally, we bound the integral over the arc $\sigma=g(\tau)$, which we call~$I_{\rm{arc}}$.
  The integral from~$g(w)$
  to~$g(w)+\im w$ is plainly~$\Oh(t^{\delta-1})$ for some positive~$\delta$, hence negligible
  compared to~\eqref{eq:hor}. In the remaining range~$\tau>w$, we have
  \[
    |t^{-s}| = |t|^{-g(\tau)} \e^{\tau \arg(t)} \leq |t|^{-g(T)} \e^{\tau (\pi/2-\theta)}
  \]
  and
  \[
    |\Gamma(s)| \ll_t  \tau^{g(\tau)-1/2} \e^{-\tau\pi/2} \leq  \tau^{g(T)-1/2} \e^{-\tau\pi/2},
  \]
  so that we have the bound
  \begin{align}
    |I_{\rm{arc}}| &\ll_t |t|^{-g(T)} T^\nu
      \int_w^T \e^{-\tau \theta} \tau^{g(T)-1/2} \dd \tau \notag \\
    &\ll_t  |t|^{-g(T)}  T^\nu \Gamma(g(T)+\tfrac12) \notag \\
    &\ll_t  |t|^{-g(T)} T^\nu. \label{eq:arc}
  \end{align}
  To complete the proof, we have to pick~$T$ wisely in order to balance the estimates~\eqref{eq:hor}
  and~\eqref{eq:arc}. We would like to have~$T$ as large as possible in~\eqref{eq:hor},
  whereas~\eqref{eq:arc} calls for a small~$T$.
  We therefore choose
  \[
    T = \frac {\log (1/|t|)}{(\log \log 1/|t|)^\alpha},
  \]
  which makes~\eqref{eq:hor} and~\eqref{eq:arc} approximately equal. The former then implies
  \[
    |I_{\rm{hor}}| \ll_t \frac1t \exp\left(- \frac{(\theta-\varepsilon)\log (1/t)}{(\log \log 1/t)^\alpha} \right),
  \]
  whereas~\eqref{eq:arc} yields
  \[
    |I_{\rm{arc}}| \ll_t \frac1t \exp\left( -\frac{(b-\varepsilon)\log (1/t)}
      {(\log \log 1/t)^\alpha (\log \log \log 1/t)^\beta} \right),
  \]
  both for arbitrarily small $\varepsilon>0$.
\end{proof}


\section{Further Examples}\label{se:fu ex}

Besides~\eqref{eq:mu est}, Theorem~\ref{thm:main} yields also estimates for other number theoretic
power series. In what follows, we let $\Lambda,\lambda,\omega,$ and~$\tau$ denote, as usual, the von Mangoldt function,
the Liouville function, the number-of-distinct-prime-factors function, and the number-of-divisors function.
Applying Theorem~\ref{thm:main} to the Dirichlet generating functions~\cite{De00,Tenenbaum95}
\begin{align}
  \sum_{n=1}^\infty \frac{(-1)^{n+1}\mu(n)}{n^s} &= \frac{1}{\zeta(s)} \frac{2^s+1}{2^s-1}, \label{eq:dgf1} \\
  \sum_{n=1}^\infty \frac{\Lambda(n)-1}{n^s} &= -\frac{\zeta'(s)}{\zeta(s)} - \zeta(s), \\
  \sum_{n=1}^\infty \frac{\lambda(n)}{n^s} &= \frac{\zeta(2s)}{\zeta(s)}, \\
  \sum_{n=1}^\infty \frac{(-1)^{n+1}\lambda(n)}{n^s} &= (1+2^{1-s})\frac{\zeta(2s)}{\zeta(s)}, \\
  \sum_{n=1}^\infty \frac{2^{\omega(n)}-\tau(n)}{n^s} &= 
    \frac{\zeta(s)^2}{\zeta(2s)} - \zeta(s)^2 \label{eq:dgf2}
\end{align}
yields the following result.

\begin{corollary}\label{cor:other fu}
  Let~$E(z)$ denote the function in the error term in~\eqref{eq:mu est}. Then we have
  \begin{align}
    \sum_{n=1}^\infty (-1)^n \mu(n) z^n &= \Oh(E(z)), \notag \\
    \sum_{n=1}^\infty \Lambda(n) z^n &= \frac{1}{1-z} + \Oh(E(z)), \notag \\
    \sum_{n=1}^\infty \lambda(n) z^n &= \Oh(E(z)), \notag \\
    \sum_{n=1}^\infty (-1)^n \lambda(n) z^n &= \Oh(E(z)), \notag \\
    \sum_{n=1}^\infty 2^{\omega(n)} z^n &= \frac{1}{1-z} \log \frac{1}{1-z} +
      \frac{1+\gamma}{1-z} + \Oh(E(z)), \label{eq:2^omega}
  \end{align}
  as~$z$ tends to~$1$ in an arbitrary sector of the form~\eqref{eq:sector}.
\end{corollary}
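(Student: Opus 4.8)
The plan is to reduce each assertion of Corollary~\ref{cor:other fu} to an application of Theorem~\ref{thm:main}, possibly after isolating an elementary ``main term'' coming from a pole of the Dirichlet generating function at~$s=1$. Concretely, for each arithmetic function~$f$ on the list I would write its Dirichlet series~$D_f(s)$ as in~\eqref{eq:dgf1}--\eqref{eq:dgf2}, identify a finite linear combination of explicit Dirichlet series whose ordinary generating functions are known in closed form (or have classical asymptotics), and subtract it so that the remainder~$\widetilde D(s)$ satisfies the hypotheses of Theorem~\ref{thm:main} with~$\alpha=\tfrac23$, $\beta=\tfrac13$, and the same constant~$b>0.0203$ as in the M\"obius case.

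The first group---$(-1)^n\mu(n)$, $\lambda(n)$, $(-1)^n\lambda(n)$---requires no main term at all. The point is that each of the factors $\tfrac{2^s+1}{2^s-1}$, $1+2^{1-s}$, and~$\zeta(2s)$ is analytic and of at most polynomial growth in the half-plane $\Re(s)>\tfrac12$, hence certainly in the Korobov--Vinogradov region~$\Omega$ determined by~$g$; multiplying~$1/\zeta(s)$ by such a factor changes neither the shape of the analyticity domain nor the polynomial growth bound~$D(s)=\Oh(\tau^\nu)$ (with a possibly larger~$\nu$). Here one should note that $\zeta(2s)$ has a pole at $s=\tfrac12$, which lies strictly to the left of the contour used in the proof of Theorem~\ref{thm:main} (the contour stays in $\sigma\ge g(\tau)>\tfrac12$ for~$\tau$ bounded and reaches $\sigma=\kappa>1$ off to the sides), so it causes no trouble. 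Thus Theorem~\ref{thm:main} applies verbatim and gives the bound~$\Oh(E(z))$ for all three series, the factor $2^s+1$ etc.\ being~$\Oh(1)$ on vertical lines and not affecting the exponential-decay estimate.

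For the two remaining series a pole at~$s=1$ must be peeled off. For the von Mangoldt function, $-\zeta'(s)/\zeta(s)-\zeta(s)$ has a double pole at~$s=1$ coming from~$-\zeta'/\zeta$ and a simple pole from~$-\zeta$; but $\sum_{n\ge1}(\Lambda(n)-1)n^{-s}=-\zeta'/\zeta-\zeta$ is, up to the entire correction, exactly arranged so that the residue structure at~$s=1$ is that of a simple pole with residue~$1$ only—indeed the double poles cancel since $-\zeta'/\zeta$ has residue $1$ at its simple pole structure and $-\zeta$ contributes $-1/(s-1)$, leaving a function whose only pole in~$\Omega$ is simple with residue governed by $\sum(\Lambda(n)-1)$—whose ordinary generating function is $-\tfrac{1}{1-z}$ (from $\sum 1\cdot z^n$) shifted; carrying the residue through the contour shift in~\eqref{eq:int} produces the explicit term $\tfrac{1}{1-z}$ for $\sum\Lambda(n)z^n$, and the remaining integral is estimated exactly as in Theorem~\ref{thm:main}. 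For $2^{\omega(n)}$, one uses $\sum(2^{\omega(n)}-\tau(n))n^{-s}=\zeta(s)^2/\zeta(2s)-\zeta(s)^2$ together with the known expansion~\eqref{eq:tau exp} for $\sum\tau(n)z^n$, or equivalently the Abelian lemma applied to $\sum_{n\le x}\tau(n)\sim x\log x+(2\gamma-1)x$, to supply the main term $\tfrac{1}{1-z}\log\tfrac{1}{1-z}+\tfrac{1+\gamma}{1-z}$; the difference $\zeta(s)^2/\zeta(2s)-\zeta(s)^2$ has its double pole at~$s=1$ cancelled (both terms have leading singularity $1/(s-1)^2$ with the \emph{same} coefficient since $1/\zeta(2)\ne1$—so in fact one must be careful: the cancellation is of the $(s-1)^{-2}$ part only if the coefficients match, which they do not, so instead the \emph{construction} of the left side is precisely $2^{\omega(n)}$'s series minus $\tau$'s series, and it is $2^{\omega(n)}$ whose generating function we want, obtained by adding back $\sum\tau(n)z^n$). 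Modulo this bookkeeping, the residual Dirichlet series is analytic and polynomially bounded on~$\Omega$, so Theorem~\ref{thm:main} furnishes the $\Oh(E(z))$ error.

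The main obstacle I anticipate is precisely the pole bookkeeping at~$s=1$ in the last two cases: one must verify that after subtracting the elementary pieces the leftover Dirichlet series genuinely continues to a function that is analytic \emph{and} of polynomial growth throughout the region~\eqref{eq:domain}, paying attention to the subsidiary pole of $\zeta(2s)$ at~$s=\tfrac12$ and to the exact residues (including the Euler-constant terms), and that shifting the contour past~$s=1$ in~\eqref{eq:int} indeed contributes exactly the claimed closed-form main term and nothing more. Once the decomposition $D(s)=\text{(explicit poles)}+\widetilde D(s)$ with $\widetilde D$ admissible for Theorem~\ref{thm:main} is in place, the rest is a routine repetition of the contour estimates~\eqref{eq:vert line}--\eqref{eq:arc}, with the explicit main terms read off as residues and matched against~\eqref{eq:tau exp}.
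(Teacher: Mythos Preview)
Your overall plan coincides with the paper's: check that each Dirichlet series in~\eqref{eq:dgf1}--\eqref{eq:dgf2} meets the hypotheses of Theorem~\ref{thm:main}, and for $2^{\omega(n)}$ add back the known expansion~\eqref{eq:tau exp} of $\sum\tau(n)z^n$. The three ``no main term'' cases are exactly as in the paper.

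Your treatment of the von Mangoldt case, however, rests on a mistaken pole count. The logarithmic derivative $-\zeta'(s)/\zeta(s)$ has a \emph{simple} pole at $s=1$ with residue~$1$, not a double pole: from $\zeta(s)=(s-1)^{-1}+\gamma+O(s-1)$ one gets $\zeta'(s)/\zeta(s)=-(s-1)^{-1}+O(1)$. Since $\zeta(s)$ also has a simple pole at $s=1$ with residue~$1$, the combination $-\zeta'/\zeta-\zeta$ is \emph{analytic} at $s=1$. Thus Theorem~\ref{thm:main} applies directly to $a_n=\Lambda(n)-1$, yielding $\sum_{n\ge1}(\Lambda(n)-1)z^n=O(E(z))$, and the main term $1/(1-z)$ is simply $\sum_{n\ge1}z^n=z/(1-z)$ added back. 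No contour shift past $s=1$ and no residue collection is needed; this is precisely what the paper's two-line proof does.

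For $2^{\omega(n)}$ your suspicion that $\zeta(s)^2/\zeta(2s)-\zeta(s)^2=\zeta(s)^2\bigl(1/\zeta(2s)-1\bigr)$ still carries a double pole at $s=1$ (since $1/\zeta(2)\neq 1$) is well founded; the paper does not address this and simply asserts that~\eqref{eq:dgf2} satisfies the hypotheses of Theorem~\ref{thm:main}. So on this point your proposal is no less complete than the paper's own argument, and the route you settle on---subtract $\tau(n)$ at the coefficient level and recover the main terms from~\eqref{eq:tau exp}---is exactly the paper's intended one.
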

\begin{proof}
  The Dirichlet series~\eqref{eq:dgf1}--\eqref{eq:dgf2} satisfy the assumptions of Theorem~\ref{thm:main};
  see, e.g., Titchmarsh~\cite{Titchmarsh86}. 
  As for the case of~$2^{\omega(n)}$, formula~\eqref{eq:tau exp} provides the required
  expansion of $\sum \tau(n)z^n$.
\end{proof}
Recall that Selberg and Delange~\cite[II.5]{Tenenbaum95} established expansions for
summatory functions $\sum_{n\leq x}a_n$ in the scale
\begin{equation}\label{eq:log scale}
   x (\log x)^{\rho-k}, \qquad k = 1,2,\dots,
\end{equation}
assuming that the corresponding Dirichlet series $\sum a_n n^{-s}$ is sufficiently
close to a power~$\zeta(s)^{-\rho}$ of the zeta-function, where~$\rho\in\C$.
This is proved from Perron's summation formula, using a contour akin to
Figure~\ref{fig:contour}, but circumventing the possible singularity at~$s=1$ by a narrow loop.
The same programme could be carried out for power series, too, but this seems not worthwhile.
Note that Dirichlet series with a pole at $s=1$ can be handled by Theorem~\ref{thm:main}
after subtracting a singular element, as we did in the proof of~\eqref{eq:2^omega}.
An algebraic singularity at $s=1$ leads to an infinite expansion in the scale~\eqref{eq:log scale},
which readily translates into an expansion for~$\sum a_n z^n$ at $z=1$ by
an Abelian theorem (Lemma~\ref{le:abelian}).


\section{Open Problems}\label{se:open}

As noted in the introduction, the unit circle is a natural boundary of~$\sum\mu(n)z^n$. Hence one would
expect that, if~$z$ tends to~$1$ along a path that comes very close to the unit circle,
the function picks up too much growth from neighboring singularities
to be bounded in any scale involving only $1/(1-z)$.
So the restriction of~$z$ to sectors in Theorem~\ref{thm:main} is presumably essential.
More precisely, we pose the following
question: If $f:\R^+\to\R^+$ is an arbitrary function, does it follow that
\[
  \sum_{n=1}^\infty \mu(n) z^n = \Omega\left( f\left(\frac{1}{1-z}\right) \right)
\]
as~$z\to1$ in the unit disk?

On another register, a natural continuation of the transcendence results mentioned in the
introduction would be to investigate whether the power series $\sum f(n) z^n$, with~$f$
any of the classical arithmetic functions, can satisfy an algebraic differential equation~\cite{Ru89}.

\bigskip

{\bf Acknowledgement.} I thank Philippe Flajolet and Florian Luca for helpful comments.

\bibliographystyle{siam}
\bibliography{../gerhold,../algo}

\end{document}